\newtheorem{Theorem}{Theorem}[section]
\newtheorem{Lemma}[Theorem]{Lemma}
\newtheorem{Corollary}[Theorem]{Corollary}
\newtheorem{Proposition}[Theorem]{Proposition}
\newtheorem{Notation and Discussion}[Theorem]{Notation and Discussion}
\newtheorem{Assumptions and Discussion}[Theorem]{Assumptions and Discussion}
\newtheorem{Question}[Theorem]{Question}
\theoremstyle{definition}
\newtheorem{remark}[Theorem]{Remark}
\def\m{{\mathfrak m}}
\def\det{{\rm det}}
\def\adj{{\rm adj}}
\title{
On the Equality of Ordinary and Symbolic Powers of Ideals
}
\author[A. Hosry, Y. Kim and J. Validashti]{Aline Hosry, Youngsu Kim and Javid Validashti}
\address{Department of Mathematics and Statistics, Notre Dame University-Louaize, P.O. Box:~72, Zouk Mikael, Zouk Mosbeh, Lebanon}
\email{ahosry@ndu.edu.lb}
\address{Department of Mathematics, Purdue University, West Lafayette, IN 47907}
\email{kim455@purdue.edu}
\address{Department of Mathematics, University of Illinois at Urbana-Champaign, IL 61801}
\email{jvalidas@illinois.edu}
\begin{document}

\maketitle
\begin{abstract}
We consider the following question concerning the equality of ordinary and symbolic powers of ideals. In a regular local ring, if the ordinary and symbolic powers of a one-dimensional prime ideal are the same up to its height, then  are they the same for all powers? We provide supporting evidence of a positive answer for classes of prime ideals defining  monomial curves or rings of low multiplicities.
\end{abstract}

\section{Introduction}
Let $R$ be a  Noetherian local  ring of dimension $d$ and $P$ a  prime  ideal of $R$. For a positive integer $n$, the $n$-th symbolic power of $P$, denoted by $P^{(n)}$, is defined as 
$$P^{(n)}:=P^nR_P \cap R=\{ x\in R \  | \  \exists \, s \in R\setminus P, \  sx\in P^n \}.$$ 
Symbolic powers of ideals are central objects in commutative algebra and algebraic geometry for their tight connection to primary decomposition of ideals and the order of vanishing of polynomials. 
One readily sees from the definition that $P^n \subseteq P^{(n)}$ for all $n$, but  they are not equal in general.
Therefore, one would like to compare the ordinary and symbolic powers and provide criteria for equality.
This problem has long been a subject of interest, see for instance \cite{BoH,ELS, Ho, HH, HH1, HuH, HKV, M}.
In this paper, we are interested in criteria for the equality. In particular, we would like to know if $P^n = P^{(n)}$ for all $n$ up to some value, then they are equal for all $n$. The following question was posed by Huneke in this regard.

\begin{Question}\label{Q1}
Let $R$ be a regular local ring  of dimension $d$ and $P$ a prime ideal of height $d-1$. If $P^{n}  = P^{(n)}$ for all $n \leq {\rm ht}\, P$, then is $P^{n} = P^{(n)}$ for all $n$?
\end{Question}

An affirmative answer to Question \ref{Q1} is equivalent to $P$ being generated by a regular sequence \cite{CN}. Furthermore, it is equivalent  to showing that if $P^{n}  = P^{(n)}$ for all $n \leq d-1$, then the analytic spread of $P$ is  $d-1$. 
This is not known even when $P$ is the defining  ideal of a monomial curve in $\mathbb{A}^4_{\textsf{k}}$.
 Huneke answered  Question \ref{Q1} positively in dimension $3$, and in dimension $4$ if $R/P$ is Gorenstein  \cite[Corollaries 2.5, 2.6]{Hu}. One would like to remove the Gorenstein assumption. There are supporting examples showing that the Gorenstein property of $R/P$ might follow from $P^2=P^{(2)}$.
In fact, this is very close to a conjecture by Vasconcelos which states that if $P$ is syzygetic and $R/P$ and $P/P^2$ are Cohen-Macaulay, then $R/P$ is Gorenstein \cite{Vas}. Note that if $P$ has height $d-1$, then $R/P$ is Cohen-Macaulay, and $P/P^2$ being Cohen-Macaulay is equivalent to $P^{2} = P^{(2)}$. Therefore, one is tempted to ask the following question.

\begin{Question}\label{con1}
Let $R$ be a regular local ring  of dimension $d$ and $P$ a prime ideal of height $d-1$. If $P^{2} = P^{(2)}$, then is $R/P$ Gorenstein?
\end{Question}

By Huneke's result  \cite[Corollary 2.6]{Hu}, if  Question \ref{con1} has an affirmative answer, then so does Question \ref{Q1} when dimension  of $R$ is $4$.
The converse of  Question \ref{con1} is true in dimension $4$ by Herzog \cite{He}. Also, Question \ref{con1} has been answered positively for some classes of algebras \cite{MX}, but it is not true in general (see for instance \cite[Example 6.1]{MX}).
In this paper, we consider the case where $P$ is the defining ideal of a monomial curve $\textsf{k}[[t^{a_1}, \ldots, t^{a_d}]]$ and we give an affirmative answer to Questions \ref{Q1} and  \ref{con1} when $d=4$ and $\{ a_i\}$ forms an arithmetic sequence. 
In higher dimensions, if $\{a_i\}$ contains an arithmetic subsequence of length $5$ in which the terms are not necessarily consecutive, we observe that  $P^{2} \not = P^{(2)}$, hence we have a positive answer to Questions \ref{Q1} and  \ref{con1}. We extend these results to certain modifications of arithmetic subsequences. We also consider one-dimensional prime ideals $P$ of a regular local ring $R$ in general and we show that if $R/P$ has low multiplicity, then Question \ref{Q1} has a positive answer. We note that if we drop the height $d-1$ assumption on $P$, then this question does not have a positive answer in general, due to a counterexample by Guardo, Harbourne and Van Tuyl \cite{GHV}.

\vskip .3in
\section{Monomial Curves}

Let $a_1, \ldots, a_d$ be an increasing sequence of positive integers with ${{\rm gcd}(a_1, \ldots, a_d)=1}$. Assume that  $a_i$'s generate a numerical semigroup   non-redundantly. Consider the   monomial curve 
$$
A=\textsf{k}[[t^{a_1}, \ldots, t^{a_d}]] \subset \textsf{k}[[t]]
$$ 
over a field $\textsf{k}$, with maximal ideal  $\m_A := (t^{a_1}, \ldots, t^{a_d})A$.  Let $R=\textsf{k}[[x_1, \ldots, x_d]]$ be a formal power series ring with maximal ideal $\m=(x_1, \ldots, x_d)R,$ and let $P$ be the kernel of the homomorphism
$$
 \textsf{k}[[x_1, \ldots, x_d]]  \longrightarrow \textsf{k}[[t^{a_1}, \ldots, t^{a_d}]]
$$
obtained by mapping $x_i$ to $t^{a_i}$ for all $i$. Therefore, $A$ is isomorphic to $R/P$.
Note that $ P \subset \m^2$ because of the non-redundancy assumption on  ${a_i}$'s. We state the following  well-known properties about monomial curves.

\begin{Lemma}\label{folklore}
In the above setting,
\begin{enumerate}
 \item The ideal $t^{a_1}A$ is a minimal reduction of $\m_A$.
 \item The Hilbert-Samuel multiplicity $e(\m_A,A)$ of $A$ is  $a_1$.
 \item The multiplicity $e(\m_A,A)$ is at least $d$, i.e., $a_1\geq d$.
\end{enumerate}

\end{Lemma}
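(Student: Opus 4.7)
The three statements are all quite standard once one unwinds the setup, so my plan is to address them in order (1), (2), (3), with each building on the previous one.

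For part (1), the strategy is to show that $\mathfrak{m}_A$ is integral over the principal ideal $(t^{a_1})$. The key observation is that, because the sequence $a_1 < a_2 < \cdots < a_d$ is increasing, for each $i$ we have $(t^{a_i})^{a_1} = (t^{a_1})^{a_i} = (t^{a_1})^{a_1} \cdot t^{a_1(a_i-a_1)}$, and $t^{a_1(a_i-a_1)} \in A$ since $a_1(a_i - a_1)$ is a non-negative multiple of $a_1$. Therefore each generator $t^{a_i}$ of $\mathfrak{m}_A$ satisfies an integral equation over $(t^{a_1})$, which shows $(t^{a_1})$ is a reduction of $\mathfrak{m}_A$. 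Minimality is automatic because the analytic spread of $\mathfrak{m}_A$ is bounded above by $\dim A = 1$, so any reduction must have at least one generator.

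For part (2), since $A$ is a one-dimensional Noetherian local domain it is Cohen-Macaulay, and $t^{a_1}$ is a parameter. Using the fact (from part (1)) that $(t^{a_1})$ is a reduction of $\mathfrak{m}_A$, we have $e(\mathfrak{m}_A, A) = e\bigl((t^{a_1}), A\bigr) = \ell_A\bigl(A / (t^{a_1})\bigr)$. To compute this length, view $A$ as the $\textsf{k}$-span of the monomials $\{t^s : s \in S\}$, where $S = \langle a_1, \ldots, a_d\rangle$ is the numerical semigroup. Then $A/(t^{a_1})$ has $\textsf{k}$-basis indexed by $S \setminus (a_1 + S)$. Grouping the elements of $S$ according to their residue class modulo $a_1$, each non-empty class is an arithmetic progression with common difference $a_1$, whose smallest element contributes exactly one basis vector to $S\setminus (a_1+S)$. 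Since $\gcd(a_1,\ldots,a_d)=1$, the images of $a_2,\ldots,a_d$ generate $\mathbb{Z}/a_1\mathbb{Z}$, so every residue class is represented in $S$, giving exactly $a_1$ basis elements.

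For part (3), the idea is to combine (2) with Abhyankar's inequality. The non-redundancy assumption on the $a_i$'s forces $P \subseteq \mathfrak{m}^2$, whence the embedding dimension of $A$ equals $\dim_{\textsf{k}}\bigl(\mathfrak{m}/(\mathfrak{m}^2+P)\bigr) = \dim_{\textsf{k}}(\mathfrak{m}/\mathfrak{m}^2) = d$. Abhyankar's inequality for Cohen-Macaulay local rings gives $e(\mathfrak{m}_A, A) \geq \operatorname{embdim}(A) - \dim(A) + 1 = d$, and substituting $e(\mathfrak{m}_A,A) = a_1$ from (2) yields the desired bound.

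The main obstacle, if any, is probably just making sure the residue-class count in (2) is fully justified; the rest reduces to invoking standard results (reductions, multiplicity of parameter ideals in a CM ring, Abhyankar's bound). Everything else is bookkeeping with the semigroup $S$.
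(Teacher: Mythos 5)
Your proposal is correct, and for the only item the paper actually argues (item (3)) it follows exactly the paper's route: Abhyankar's inequality $e(\m_A,A)\ge \lambda(\m_A/\m_A^2)$ together with the observation that non-redundancy of the $a_i$'s forces $\lambda(\m_A/\m_A^2)=d$. Items (1) and (2) are stated as folklore in the paper without proof, and your integral-dependence equation for each $t^{a_i}$ and the Ap\'ery-set count of $S\setminus(a_1+S)$ are the standard ways to supply them; the one small imprecision is the justification of minimality in (1), which is cleaner to see directly (a principal ideal generated by a nonzerodivisor admits no proper reduction, or equivalently a reduction generated by $\ell(\m_A)=1$ element is minimal) than via the remark that ``any reduction must have at least one generator.''
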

For the third property in above, we may assume that $\textsf{k}$ is an infinite field. Then by \cite[Fact (1)]{Abh}, we have $e(\m_A,A) \ge \lambda(\m_A/\m_A^2)$, where $ \lambda(-)$ denotes the length, and observe that $\lambda(\m_A/\m_A^2) = d$, by the non-redundancy condition on the $a_i$'s.  Note that the third property  also follows from Theorem \ref{lowd}. We begin with the following result that describes  the generators of $P$ when $d=4$ and the set of exponents $\{a_i\}$ forms an arithmetic sequence.

\begin{Proposition}\label{gens}
Let $A$ be the monomial curve $\textsf{k}[[t^a, t^{a+r}, t^{a+2r}, t^{a+3r}]]$, where $a$ and $r$ are positive integers that are relatively prime. Regard $A$ as $R/P$, where  $R=\textsf{k}[[x,y,z,w]]$ and $P$ is the defining ideal of $A$. Then $P$ is minimally generated by 
\begin{align*}
&z^2-yw,\; yz-xw,\; y^2-xz,\; x^{k+r}-w^k, &\hbox{if} \quad a = 3k \ \ \ \ \  \\
&z^2-yw,\; yz-xw,\; y^2-xz,\; x^{k+r}z-w^{k+1},\; x^{k+r}y-zw^k,\; x^{k+r+1}-yw^k,  &\hbox{if}  \quad a = 3k + 1\\
&z^2-yw,\; yz-xw,\; y^2-xz,\; x^{k+r+1}-zw^k,\; x^{k+r}y-w^{k+1},  &\hbox{if} \quad a = 3k + 2
\end{align*}
where $k$ is a positive integer.
\end{Proposition}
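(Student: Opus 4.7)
The plan is to verify that the listed binomials lie in $P$, then show the ideal $J$ they generate equals $P$ via a rank argument over $\textsf{k}[[x]]$, where $x$ denotes the first variable of $R$. First, each listed element is of the form $x^\alpha - x^\beta$ (a binomial in $x, y, z, w$), and membership in $P$ amounts to the exponent identity $\sum \alpha_i a_i = \sum \beta_i a_i$, which is immediate for the three Hankel minors $y^2 - xz$, $yz - xw$, $z^2 - yw$ (common to all three cases) and a short case-specific check for the remaining one, two, or three extra binomials.

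The key step is to show that $R/J$ is generated by exactly $a$ elements as a $\textsf{k}[[x]]$-module. Using $y^2 \equiv xz$, $yz \equiv xw$, $z^2 \equiv yw$ modulo $J$, every monomial reduces to one of the form $x^i w^j$, $x^i y w^j$, or $x^i z w^j$. The extra generators then bound $j$: for $a = 3k$, the relation $w^k \equiv x^{k+r}$ reduces $j$ to at most $k-1$, yielding $3k$ normal-form monomials; for $a = 3k+1$, the three extra relations $w^{k+1} \equiv x^{k+r} z$, $z w^k \equiv x^{k+r} y$, $y w^k \equiv x^{k+r+1}$ yield normal forms $w^j$ with $j \leq k$, $y w^j$ with $j \leq k-1$, and $z w^j$ with $j \leq k-1$, giving $3k+1$ monomials; for $a = 3k+2$, the two extra relations give normal forms $w^j$ with $j \leq k$, $y w^j$ with $j \leq k$, and $z w^j$ with $j \leq k-1$, giving $3k+2$ monomials. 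In each case, $R/J$ is therefore generated as a $\textsf{k}[[x]]$-module by exactly $a$ elements.

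To conclude $J = P$, I apply Lemma \ref{folklore}: $t^a A$ is a minimal reduction of $\m_A$ and $e(\m_A, A) = a$. Since $A = R/P$ is a one-dimensional Cohen-Macaulay domain, $A$ is a free $\textsf{k}[[x]]$-module of rank $a$. Composing the surjection $\textsf{k}[[x]]^a \twoheadrightarrow R/J$ obtained above with $R/J \twoheadrightarrow R/P = A$ gives a surjective $\textsf{k}[[x]]$-module homomorphism between free modules of the same finite rank $a$, which must be an isomorphism; hence both surjections are isomorphisms and $J = P$. Minimality of the listed generating set follows from inspection: the three Hankel minors are linearly independent in the degree-two component of $P$, and each remaining extra generator involves a monomial (such as $x^{k+r}$, $x^{k+r} z$, $x^{k+r} y$, or $x^{k+r+1}$) that does not appear in any other listed generator, giving linear independence in $P/\m P$.

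The main obstacle is the normal-form reduction: one must verify uniformly in all three cases that the listed binomials suffice to reduce every monomial of $R$ to a $\textsf{k}[[x]]$-linear combination of the claimed normal forms, with no further reductions available. For $a = 3k$ (with its single extra generator), termination is straightforward via a measure such as $(k+1)(\beta+\gamma) + \delta$ on the exponents of $x^\alpha y^\beta z^\gamma w^\delta$. For $a = 3k+1$ and $a = 3k+2$, where several extra generators interact, it is cleanest to work with a suitable local term ordering on $R$ and check that the listed binomials form a standard basis whose leading monomials cut out exactly the claimed normal-form set.
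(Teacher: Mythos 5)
Your proposal is correct in substance but takes a genuinely different route from the paper's. The paper also begins by checking $I\subseteq P$, and then reduces everything to the single length inequality $\lambda_R(R/(I,x))\le a=\lambda_R(R/(P,x))$, which it proves via the exact sequence $0\to R/(\tilde I:y)\to R/\tilde I\to R/(\tilde I,y)\to 0$ and explicit bounds on the two quotients; it then concludes $P=I$ from $(P,x)=(I,x)$ together with $P:x=P$ and Nakayama. Your normal-form count is that same inequality in disguise, since the minimal number of $\textsf{k}[[x]]$-module generators of $R/J$ is exactly $\lambda_R(R/(J,x))$, but you close the argument differently, via freeness of $A$ of rank $a$ over the DVR $\textsf{k}[[x]]$ and the fact that a surjection between free modules of equal finite rank is an isomorphism; both closings are valid. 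Two points need attention in your version. First, your rewriting rules are not degree-preserving (the extra relations raise degree by $r$), so to reduce a power series rather than a polynomial you should note that each step weakly increases total degree, whence only finitely many monomials of $f$ contribute to the coefficient of any fixed $x^i\nu$ with $\nu$ a normal form; with that remark the reduction converges and your normal-form sets do give exactly $a$ generators in each case. Second, in the minimality argument the correct criterion is not that a monomial fails to appear in the other listed generators, but that it is not a monomial of any element of $\m P$, equivalently that no monomial of any generator divides it with quotient in $\m$. Your example $x^{k+r}z$ fails this test, since it occurs in $x^{k+r-1}(y^2-xz)\in\m P$; for the generator $x^{k+r}z-w^{k+1}$ in the case $a=3k+1$ you must use the monomial $w^{k+1}$ instead (the monomials $x^{k+r}$, $x^{k+r}y$, $x^{k+r+1}$, $zw^k$ do pass the test). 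With these repairs your minimality argument is actually more self-contained than the paper's, which passes to $R/xR$ and invokes Theorem \ref{main} and the Buchsbaum--Eisenbud structure theorem for two of the three cases.
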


\begin{proof} Since the numerical semigroup is non-redundantly generated, $a$ is greater than or equal to $4$ by Lemma ~\ref{folklore}. Thus $k \ge 2$ if $a = 3k$ and $k \ge 1$ if $a = 3k+1$ or $3k+2$.  In each case, let $I$ be the ideal  generated by the above-listed elements and $\m$ be the maximal ideal $ (x,y,z,w)$ of $R$. One can  directly check that  $ I \subset P$. For all cases, we will use the following method to show  $I=P$.  First, we show that $(P,x) = (I,x)$. Then it follows that $P = I + x(P:x)$. But $(P:x) = P$, since $x$ is not in $P$. Thus $P = I + xP$, which implies $P = I$, by Nakayama's Lemma.  To show $(P,x) = (I,x)$, let $\tilde{I} = (I,x)$. The short exact sequence
$$0 \longrightarrow R/(\tilde{I} :y) \xrightarrow{ \ \ \cdot y \ \, } R/\tilde{I} \longrightarrow R/(\tilde{I},y) \longrightarrow 0 $$
yields the length equation $ \lambda_R(R/\tilde{I}) =  \lambda_R(R/(\tilde{I}:y)) +  \lambda_R(R/(\tilde{I}, y))$. Since $R/P$ is Cohen-Macaulay and the image of the ideal $(x)$ in $R/P$  is a minimal reduction of $\m/P$ by Lemma ~\ref{folklore}, we have
$$
a=e(\m,R/P) =    \lambda_R(R/(P,x)) \le  \lambda_R(R/\tilde{I}).
$$ 
Thus, it is enough to show 
$$ 
 \lambda_R(R/(\tilde{I}:y)) +  \lambda_R(R/(\tilde{I}, y)) \leq a.
$$
If $ a = 3k$, then $\tilde{I} = (x, z^2-yw, yz,y^2,w^k)$. Therefore, $(\tilde{I}, y) = (x,y, z^2,w^k)$ and  the ideal $\tilde{I}:y$ contains the ideal $(x,y,z,w^k)$. Thus,
$$  \lambda_R(R/(\tilde{I}:y)) +  \lambda_R(R/(\tilde{I}, y)) \leq  \lambda_R(R/(x,y,z,w^k))  +  \lambda_R(R/(x,y,z^2,w^k))  \leq  k+ 2k = a.$$
If $ a = 3k + 1$, then $\tilde{I} = (x, z^2-yw, yz, y^2, w^{k+1},zw^k,yw^k)$. Hence, 
$ (x,y,z,w^k) \subset \tilde{I}:y $ and $(\tilde{I}, y) = (x, y, z^2,  zw^k,w^{k+1})$. Note that $ \lambda_R(R/(x, y, z^2, zw^k, w^{k+1}))  = 2k+1$ 
and $ \lambda_R(R/(x,y,z,w^k)) = k$. Thus, 
$$ \lambda_R(R/(\tilde{I}:y)) +  \lambda_R(R/(\tilde{I}, y)) \leq k+(2k+1) = a.$$
If $a = 3k + 2$, then $\tilde{I} = (x, z^2-yw,yz, y^2,zw^k, w^{k+1})$. Therefore, $ (x,y,z,w^{k+1})\subset \tilde{I}:y $ and $(\tilde{I}, y) = (x, y, z^2, zw^k, w^{k+1})$. Similar to the previous case, $ \lambda_R(R/(x, y, z^2,  zw^k, w^{k+1}))$ is 
$2k+1$ and $ \lambda_R(R/(x,y,z,w^{k+1})) = k + 1$. Hence we obtain
$$  \lambda_R(R/(\tilde{I}:y)) +  \lambda_R(R/(\tilde{I}, y)) \leq (k+1)+ (2k+1) = a.$$
To show that $P$ is minimally generated by the listed elements in each case, we can compute $\mu(P)= \lambda_R(P/\m P)$. In fact, if we let $\bar{R}=R/xR$, then
$$
\mu(P\bar{R})= \lambda_R(P\bar{R}/\m P\bar{R}) =  \lambda_R(P +(x)/\m P + (x)) 
= \lambda_R(P/\m P + P\cap(x)).
$$
But $P\cap(x) = x(P:x)=xP \subset \m P$. Thus $\mu(P\bar{R})= \mu(P)$. Therefore, to compute the minimal number of generators in each case, we can go modulo $(x)$ first.  If $a=3k$, we will show in Theorem \ref{main} that $P^{2} \not = P^{(2)}$, hence $P$  is not a complete intersection ideal. Thus it cannot have fewer number of generators than $4$. If $a=3k+2$, we will show  in Theorem \ref{main} that  $P$ is generated by the $4$ by $4$ Pfaffians of a $5$ by $5$ skew-symmetric matrix. Hence by Buchsbaum-Eisenbud structure theorem for height $3$ Gorenstein ideals in \cite{EB}, $P$ is minimally generated by the listed elements in this case. Thus, we only need to deal with the case $a=3k+1$, where one can check directly that the ideal $P\bar{R}$ is minimally generated by
$
z^2-yw,\; yz,\; y^2,\; w^{k+1},\; zw^k,\; yw^k.
$
\end{proof}

\begin{Theorem}\label{main}
Let $A$ be the monomial curve $A=\textsf{k}[[t^a, t^{a+r}, t^{a+2r}, t^{a+3r}]]$, where $a$ and $r$ are positive integers that are relatively prime. Regard $A$ as $R/P$, where  $R=\textsf{k}[[x,y,z,w]]$ and $P$ is the defining ideal of $A$.  
\begin{enumerate}
\item If $a=3k$ or $3k+1$, then $R/P$ is not Gorenstein and $P^{2} \not = P^{(2)}$. 
\item If $a=3k+2$, then $R/P$ is Gorenstein, $P^{2} = P^{(2)}$ and $P^{3} \not = P^{(3)}$.
\end{enumerate}
\end{Theorem}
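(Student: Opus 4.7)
The plan is to treat the three residue classes of $a$ modulo~$3$ separately, using the explicit generating sets from Proposition~\ref{gens}.

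\textbf{Case $a = 3k+2$.} I would first exhibit $P$ as the ideal of $4 \times 4$ Pfaffians of an explicit $5 \times 5$ alternating matrix $\Phi$ over $R$, whose entries can be reconstructed from the first syzygies among the five listed generators, with the shape of $\Phi$ constrained by the $\mathbb{Z}$-grading $\deg x_i = a_i$. The Buchsbaum-Eisenbud structure theorem~\cite{EB} then yields that $R/P$ is Gorenstein. For $P^{(2)} = P^2$: since $R/P$ is $1$-dimensional Cohen-Macaulay, this is equivalent to $\m \notin \operatorname{Ass}(R/P^2)$, i.e., $P^2 : \m = P^2$. I would establish the colon identity by a direct computation, using the Pfaffian relations from $\Phi$ to rewrite candidate elements of $P^2 : \m$ as $R$-linear combinations of products of pairs of generators. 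Finally, $P^{(3)} \ne P^3$ follows from the Cowsik-Nori theorem~\cite{CN}: $P^{(n)} = P^n$ for all $n \le \operatorname{ht}(P) = 3$ would force $P$ to be a complete intersection, impossible since $\mu(P) = 5$; combined with $P^{(2)} = P^2$, this yields $P^{(3)} \ne P^3$.

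\textbf{Case $a = 3k+1$.} Proposition~\ref{gens} gives $\mu(P) = 6$, and a height-$3$ Gorenstein ideal must have an odd number of minimal generators (Buchsbaum-Eisenbud), so $R/P$ is not Gorenstein. For $P^{(2)} \ne P^2$, I would use the nonlinear syzygies
\[
y g_1 - z g_2 = w^k f_1,\qquad x g_1 - z g_3 = w^k f_2,\qquad x g_2 - y g_3 = w^k f_3,
\]
verified by direct substitution (here $f_i, g_j$ are as in Proposition~\ref{gens}), together with the Koszul syzygies $x f_1 - y f_2 + z f_3 = 0$ and $y f_1 - z f_2 + w f_3 = 0$, to exhibit an explicit $R/P$-torsion element of the conormal module $P/P^2$. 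Since the torsion submodule of $P/P^2$ is precisely $P^{(2)}/P^2$, this produces an element of $P^{(2)} \setminus P^2$; the non-containment in $P^2$ can be checked by a weight count in the $\mathbb{Z}$-grading $\deg x_i = a_i$.

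\textbf{Case $a = 3k$.} I would first establish $P^{(2)} \ne P^2$ by the same method, using syzygies analogous to the above with $f_4 = x^{k+r} - w^k$ in place of the $g_j$. Proposition~\ref{gens} then forces $\mu(P) = 4$ (since a complete intersection would satisfy $P^{(n)} = P^n$ for all $n$ by Cowsik-Nori), and since $4$ is even, $R/P$ is not Gorenstein.

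The main technical obstacle is the Gorenstein case $a = 3k+2$: constructing $\Phi$ and verifying the colon-ideal identity $P^2 : \m = P^2$ are both substantial computations, with the latter the conceptual heart of the theorem. In the other two cases, once the nonlinear syzygies are identified, the witness for $P^{(2)} \ne P^2$ follows by more routine manipulation.
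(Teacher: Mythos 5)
Your case division and several of your ingredients match the paper's proof, but there are two genuine gaps. The most serious is your derivation of $P^{(3)} \ne P^3$ when $a = 3k+2$: you claim that $P^{(n)} = P^n$ for all $n \le {\rm ht}\, P = 3$ would force $P$ to be a complete intersection ``by Cowsik--Nori.'' The Cowsik--Nori theorem \cite{CN} requires the equality (equivalently, Cohen--Macaulayness of $R/P^n$) for all $n$, or at least infinitely many $n$; the assertion that equality up to the height already forces a complete intersection is precisely the open Question \ref{Q1} that this paper is investigating, so as written your argument is circular. What actually closes this step is the Gorenstein property you have just established: Huneke's result \cite[Corollary 2.6]{Hu} says that for a height-three prime in a four-dimensional regular local ring with $R/P$ Gorenstein, $P^{(n)} = P^n$ for all $n \le 3$ does force a complete intersection, and since $\mu(P) = 5$ this yields $P^{(3)} \ne P^3$.

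The second gap is that the two hardest verifications in your plan are announced but not performed, and in both the missing tool is Herzog's theorem \cite[Satz 2.8]{He}: for such a prime, $R/P$ is Gorenstein if and only if $P^{(2)} = P^2$. With it, the ``substantial computation'' $P^2 : \m = P^2$ in the case $a = 3k+2$ is unnecessary --- Gorensteinness, obtained from the Pfaffian presentation and Buchsbaum--Eisenbud \cite{EB} exactly as you propose, immediately gives $P^{(2)} = P^2$ --- and conversely, in the cases $a = 3k$ and $3k+1$ the non-Gorenstein conclusion follows at once from $P^{(2)} \ne P^2$ (though your parity-of-$\mu(P)$ argument via Buchsbaum--Eisenbud also works). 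As for producing the witness in $P^{(2)} \setminus P^2$: your syzygies $y g_1 - z g_2 = w^k f_1$, etc., do check out, but you never actually exhibit the torsion element, so the argument is incomplete. The paper's device is more direct: $P$ contains the $2\times 2$ minors of an explicit $3\times 3$ matrix $M$ (with rows $(x,y,z)$, $(y,z,w)$, $(zw^{k-1}, w^{k}, x^{k+r})$ when $a = 3k+1$); then $D = \det(M) \notin P^2$ (checked, e.g., by reducing modulo $(x,y)$), while $D^2 = \det(\adj(M)) \in P^3$ because every entry of $\adj(M)$ lies in $P$, so the image of $D$ in the associated graded domain ${\rm gr}_{PR_P}(R_P)$ is nilpotent, hence zero, giving $D \in P^{(2)}$. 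You would need to supply an equally explicit element to complete your version of cases $a = 3k$ and $a = 3k+1$.
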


\begin{proof} 
If $a=3k$, then one can see that $P$ contains the 2 by 2 minors of 
$$
M =
\left[
\begin{array}{ccc}
x & y & z\\
y & z & w\\
zw^{k-1} & x^{k+r} & yx^{k+r-1}\\
\end{array}
\right].
$$
Let $D = \det (M)$.  Note that $ D \not  \in P^2$, since $D$ is not in $P^2$  modulo $(x, y)$. We will show that $D \in P^{(2)}$.
We have $\det (\adj (M)) \cdot D = D^3$, where $\adj (M)$ is the adjoint matrix of $M$. Note that $D \not = 0$, for example it is not zero modulo $(x,y)$. Thus $D^2=\det (\adj (M))$.  But $\det (\adj (M)) \in P^3$, since the entries of $\adj (M)$ are in $P$.
Hence $D^2 \in P^3$. Therefore, the image of $D^2$ in the associated graded ring $G_P:={\rm gr}_{PR_P}(R_P)$ is zero. Note that $G_P$ is a domain as $R_P$ is a regular local ring. Hence the image of $D$ is zero in $G_P$, which shows that the image of $D$ in the localization $R_P$ is in $P^2R_P$, i.e., $D \in P^{(2)}$.  One could also directly show that $w\cdot \det (M) \in P^2$, hence $\det (M) \in P^{(2)}$, as $w$ is not in $P$. Now by Herzog's theorem \cite[Satz 2.8]{He}, we conclude that $R/P$ is not Gorenstein. We note that since in Proposition \ref{gens} we have shown that $P$ is minimally generated by $4$  elements, we could also use Buchsbaum-Eisenbud structure theorem for height $3$ Gorenstein ideals in \cite{EB}, or Bresinsky's  result in \cite{Bre}  which states that if a monomial curve in dimension $4$ is Gorenstein, then  $P$ is minimally generated by $3$ or $5$ elements. \\

\noindent
If $a=3k+1$, then  $P$ contains the $2$ by $2$ minors of 
$$
M =
\left[
\begin{array}{ccc}
x & y & z\\
y & z & w\\
zw^{k-1} & w^{k} & x^{k+r}\\
\end{array}
\right].
$$
With a similar argument as in the previous case, one can show that ${\rm det}(M) \in P^{(2)} \setminus P^2$. Thus by Herzog's result, $R/P$ is not Gorenstein.\\

\noindent
If $a=3k+2$, then by Proposition \ref{gens}, one can see that $P$ is generated by the $4$ by $4$ Pfaffians of 
$$
M =
\left[
\begin{array}{ccccc}
0   & -w^k     & 0       & x & y\\
w^k & 0        & x^{k+r} & y & z\\
0   & -x^{k+r} & 0       & z & w\\
-x  & -y       & -z      & 0 & 0\\
-y  & -z       & -w      & 0 & 0\\
\end{array}
\right].
$$
\vskip .1in
\noindent Thus, by Buchsbaum-Eisenbud structure theorem for height $3$ Gorenstein ideals in \cite{EB}, we obtain that $R/P$ is Gorenstein and $P$ is minimally generated by the $5$ listed elements in Proposition \ref{gens}. Hence, $P^{2} = P^{(2)}$ by Herzog's result \cite[Satz 2.8]{He}, and $P^{3} \not = P^{(3)}$ by Huneke's result \cite[Corollary 2.6]{Hu}, as $P$ is not a complete intersection ideal.
\end{proof}

\begin{Corollary}
Question \ref{Q1} and Question \ref{con1} have  affirmative answers for monomial curves as in Theorem \ref{main}.
\end{Corollary}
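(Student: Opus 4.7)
The plan is to run a case analysis on $a \bmod 3$ that mirrors the dichotomy in Theorem \ref{main}, and in each case verify the implications posed by Question \ref{Q1} and Question \ref{con1}. Both questions are conditionals of the form ``if a specified equality between ordinary and symbolic powers holds, then \ldots,'' so in each case I will either exhibit the desired conclusion directly from Theorem \ref{main} or show that the hypothesis of the question fails, in which case the implication is vacuously true.

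For Question \ref{con1}, the hypothesis is $P^{2} = P^{(2)}$. When $a = 3k$ or $a = 3k+1$, Theorem \ref{main}(1) yields $P^{2} \neq P^{(2)}$, so the hypothesis fails and nothing needs to be checked. In the remaining case $a = 3k+2$, Theorem \ref{main}(2) supplies both halves of the implication simultaneously: $P^{2} = P^{(2)}$ and $R/P$ is Gorenstein. This disposes of Question \ref{con1}.

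For Question \ref{Q1}, note that $P$ has height $d - 1 = 3$ in the regular local ring $R = \textsf{k}[[x,y,z,w]]$, so the hypothesis requires $P^{n} = P^{(n)}$ for $n = 1, 2, 3$. When $a \equiv 0, 1 \pmod 3$, Theorem \ref{main}(1) shows that equality already fails at $n = 2$. When $a \equiv 2 \pmod 3$, the equality $P^{2} = P^{(2)}$ from Theorem \ref{main}(2) does hold, but $P^{3} \neq P^{(3)}$, so the hypothesis still fails at $n = 3$. In every case the hypothesis is false, so the implication is vacuously satisfied.

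There is no substantive obstacle: the corollary is essentially a bookkeeping statement that packages the content of Theorem \ref{main} in the language of Questions \ref{Q1} and \ref{con1}, and the only ``work'' is observing that in each of the three residue classes at least one of the required symbolic-equals-ordinary conditions fails within the range $n \leq \mathrm{ht}\, P$, except in the Gorenstein case $a = 3k+2$ where the conclusion of Question \ref{con1} is satisfied outright.
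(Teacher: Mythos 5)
Your proposal is correct and matches the paper's (implicit) argument: the corollary is stated without proof precisely because, as you observe, Theorem \ref{main} makes the hypothesis of Question \ref{Q1} fail in every residue class (at $n=2$ when $a\equiv 0,1 \pmod 3$ and at $n=3$ when $a\equiv 2 \pmod 3$), and either falsifies the hypothesis or directly verifies the conclusion of Question \ref{con1}. Your case analysis is exactly the intended bookkeeping.
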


Now we consider monomial curves in higher dimensions.

\begin{Theorem} \label{higher}
Let $A$ be the monomial curve $\textsf{k}[[t^{a_1}, \ldots, t^{a_d}]]$.  Consider $A$ as $R/P$, where  $R=\textsf{k}[[x_1,\ldots, x_d]]$ and $P$ is the defining ideal of $A$. If  $\{a_i\}$ has an arithmetic  subsequence of length $5$, whose terms are not necessarily consecutive, then $ P^2 \not =P^{(2)}$. 
\end{Theorem}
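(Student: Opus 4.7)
The plan is to mimic the $3\times 3$ adjoint-determinant trick used in Theorem \ref{main}(1), but with a pure Hankel matrix whose entries are all linear forms, so that the order bound $P\subseteq\m^2$ immediately forces the determinant outside $P^2$. Relabel the arithmetic subsequence as $a_{i_1}<\cdots<a_{i_5}$ with common difference $s$, and set $X_j:=x_{i_j}$ for $1\le j\le 5$. The candidate witness for $P^{(2)}\ne P^2$ will be $\det(N)$, where
$$
N=\begin{pmatrix} X_1 & X_2 & X_3 \\ X_2 & X_3 & X_4 \\ X_3 & X_4 & X_5 \end{pmatrix}.
$$

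First I would verify that every $2\times 2$ minor of $N$ is a binomial $X_jX_\ell-X_mX_n$ with $a_{i_j}+a_{i_\ell}=a_{i_m}+a_{i_n}$, and hence lies in $P$; this is immediate from the arithmetic progression. Thus $\adj(N)$ has entries in $P$, and the classical identity $\det(N)^2=\det(\adj(N))$ places $\det(N)^2\in P^3$ after expanding the right-hand side as a sum of products of three entries of $\adj(N)$. The inclusion $\det(N)\in P^{(2)}$ then follows by reusing the associated-graded-ring argument from the proof of Theorem \ref{main}(1): since $R_P$ is regular, $\mathrm{gr}_{PR_P}(R_P)$ is a polynomial ring over the residue field, hence a domain, so any element whose square lies in $P^3R_P$ must itself lie in $P^2R_P$, that is, in $P^{(2)}$.

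The decisive step, which is where the length-$5$ hypothesis (rather than length-$4$) pays off, is that $P\subseteq\m^2$ by non-redundancy, so $P^2\subseteq\m^4$, whereas the explicit expansion
$$
\det(N)=X_1X_3X_5-X_1X_4^2-X_2^2X_5+2X_2X_3X_4-X_3^3
$$
is a nonzero homogeneous cubic in the distinct variables $X_1,\ldots,X_5$ (the term $-X_3^3$ cannot cancel against any other monomial), so $\det(N)\in\m^3\setminus\m^4$ and therefore $\det(N)\notin P^2$. Combined with the previous step, this yields $P^{(2)}\ne P^2$.

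I foresee no serious obstacle; the conceptual point is simply that five terms in arithmetic progression provide exactly the five distinct variables needed to build a pure Hankel $3\times 3$ matrix, and its cubic determinant automatically has the right $\m$-adic order to separate $P^2$ from $P^{(2)}$, uniformly in the remaining $a_i$'s and the ambient dimension $d$. For a length-$4$ arithmetic subsequence this route is unavailable because a $3\times 3$ Hankel matrix requires five distinct entries, which is precisely why the analogous step in Theorem \ref{main}(1) had to augment the Hankel rows with a non-linear third row and verify nonvanishing modulo an auxiliary ideal.
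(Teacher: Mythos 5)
Your proposal is correct and follows essentially the same route as the paper: the same $3\times 3$ Hankel matrix in the five relabeled variables, the same adjoint identity $\det(N)^2=\det(\adj(N))\in P^3$ combined with the domain property of $\mathrm{gr}_{PR_P}(R_P)$ to get $\det(N)\in P^{(2)}$, and the same degree count ($\det(N)$ is a nonzero cubic while $P^2\subseteq\m^4$) to get $\det(N)\notin P^2$. Your explicit expansion of $\det(N)$ and the observation that $-X_3^3$ survives in any characteristic is a slightly more careful verification of nonvanishing than the paper's reduction modulo $(x_2,x_3)$, but the argument is the same.
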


\begin{proof}
If $\{a_i\}$ has an arithmetic subsequence $\{ b_1, \ldots, b_5\}$ of length $5$, without loss of generality we may assume that $x_1, \ldots, x_5$ correspond to $t^{b_1}, \ldots, t^{b_5}$.
Then, one can see that $P$ contains the $2$ by $2$ minors of 
$$
M = 
\left[
\begin{array}{ccc}
x_1 & x_2 & x_3\\
x_2 & x_3 & x_4\\
x_3 & x_4 & x_5\\
\end{array}
\right].
$$
We observe that $\det (M) \not \in P^2$, since $\det (M)$ is a  polynomial of degree $3$ and the generators of $P^2$ have degree at least $4$ as $P \subset \m^2$. Also note that $\det (M) \not = 0$, for example it is not zero modulo $(x_2, x_3)$.
Thus, by a similar argument as in the proof of Theorem \ref{main}, one can show  that $\det (M)  \in P^{(2)}$.
\end{proof} 

\begin{Corollary}
Question \ref{Q1} and Question \ref{con1} have positive answers for monomial curves as in Theorem \ref{higher}.
\end{Corollary}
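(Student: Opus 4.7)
The plan is to deduce this corollary directly from Theorem \ref{higher} by observing that the conclusion $P^2 \neq P^{(2)}$ already falsifies the hypotheses of both Question \ref{Q1} and Question \ref{con1}, so the two implications hold vacuously. First I would verify that the hypothesis of Question \ref{Q1} does encompass the equality at the second power: since $\{a_i\}$ contains an arithmetic subsequence of length $5$, the ambient dimension satisfies $d \geq 5$, so $P$ is a one-dimensional prime of height $d-1 \geq 4$. In particular $2 \leq \mathrm{ht}\, P$, so the range $n \leq \mathrm{ht}\, P$ in Question \ref{Q1} includes $n = 2$, and the hypothesis requires $P^2 = P^{(2)}$ among its other equalities.

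Next, I would invoke Theorem \ref{higher} to conclude that $P^2 \neq P^{(2)}$. This immediately shows that the hypothesis of Question \ref{con1} fails, so the implication ``if $P^2 = P^{(2)}$ then $R/P$ is Gorenstein'' is vacuously true. The hypothesis of Question \ref{Q1} is strictly stronger (it asks equality for all $n \leq \mathrm{ht}\, P$, hence in particular at $n = 2$), so it also fails, and the corresponding implication is vacuously true as well.

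There is essentially no obstacle here; all the genuine content is packaged in Theorem \ref{higher}, whose explicit element $\det(M) \in P^{(2)} \setminus P^2$ witnesses the failure. One could, if desired, point out as a strengthening that by Herzog's criterion \cite{He} the same witness forces $R/P$ to be non-Gorenstein, so the monomial curves covered by Theorem \ref{higher} are in fact \emph{explicit non-Gorenstein} examples whose second symbolic and ordinary powers already disagree, rather than only vacuous ones.
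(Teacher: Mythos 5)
Your proposal is correct and matches the paper's (implicit) reasoning: the corollary is immediate from Theorem \ref{higher} because $P^2 \neq P^{(2)}$ falsifies the hypotheses of both questions, and since $d \geq 5$ the case $n=2$ indeed lies in the range $n \leq \mathrm{ht}\, P = d-1$. The paper states the corollary without proof, and your vacuous-implication argument is exactly the intended one.
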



Using a result of Morales \cite[Lemma 3.2]{Mo}, we can extend  Theorems \ref{main} and \ref{higher} to a larger class of monomial curves. As before, let $A$ be the monomial curve $\textsf{k}[[t^{a_1}, \ldots, t^{a_d}]]$. In the following we will not assume any particular order on  the $a_i$'s.
Write $A$ as $R/P$, where  $R$ is $\textsf{k}[[x_1,\ldots, x_d]]$ and $P$ is the defining ideal of $A$. For a positive integer $c$, relatively prime to $a_1$,  let $\tilde{A}$ be the modified monomial curve $\textsf{k}[[t^{a_1}, t^{ca_2}, \ldots, t^{ca_d}]]$. 
Note that $a_1, ca_2, \ldots, ca_d$ non-redundantly generate their numerical semigroup too.
Write $\tilde{A}$ as $\tilde{R}/\tilde{P}$, where  $\tilde{R}$ denotes $\textsf{k}[[x_1,\ldots, x_d]]$ and $\tilde{P}$ is the defining ideal of $\tilde{A}$.
Consider $\tilde{R}$ as an $R$-module via the map $\phi: R \longrightarrow \tilde{R}$ that sends
$x_1$ to $x_1^c$  and fixes $ x_i$  for all $i \not = 1$. For a polynomial $f(x_1, \ldots, x_d) \in R$, let $\tilde{f}$ be the polynomial  $f(x_1^c, \ldots, x_d)$.

\begin{Lemma}\label{mor} \hskip -.05in {\rm(Morales)}
$\tilde{R}$ is a faithfully flat extension of $R$. Moreover,
$P\tilde{R}\cap R = P$ and $\tilde{P}=P\tilde{R}$. In fact, $f\in P$ if and only if $\tilde{f} \in \tilde{P}$, and if $\{g_i\}$ is a minimal generating set for $P$, then $\{\tilde{g_i}\}$ is a minimal generating set for $\tilde{P}$. In addition, for all positive integers $k$, $f\in P^k$ if and only if $\tilde{f} \in \tilde{P}^k$, and $f\in P^{(k)}$ if and only if $\tilde{f} \in \tilde{P}^{(k)}$,
i.e., $\tilde{P}^k \cap R = P^k$ and $\tilde{P}^{(k)} \cap R = P^{(k)}$.
\end{Lemma}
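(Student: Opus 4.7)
The plan is to realize $\phi$ as a faithfully flat base change, then establish the identity $\tilde P=P\tilde R$, and finally derive the remaining statements as formal consequences of these two facts plus the free-basis structure of $\tilde R$ over $R$.

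First I would observe that every power series in $\tilde R=\textsf{k}[[x_1,\ldots,x_d]]$ decomposes uniquely according to the residue class of its $x_1$-exponent modulo $c$. This exhibits $\tilde R$ as a free $R$-module on the basis $\{1,x_1,x_1^2,\ldots,x_1^{c-1}\}$ via $\phi$, and since $\phi(\m)\subset\tilde\m$, the extension is faithfully flat. In particular, $I\tilde R\cap R=I$ for every ideal $I$ of $R$, which will handle the contraction statements $\tilde P^k\cap R=P^k$ as soon as we know $\tilde P=P\tilde R$.

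The heart of the argument is showing $\tilde P=P\tilde R$. The inclusion $P\tilde R\subseteq\tilde P$ follows from the composition $R\xrightarrow{\phi}\tilde R\to\textsf{k}[[t]]$, which sends $x_i$ to $t^{ca_i}$ and therefore has kernel exactly $P$ (since $t\mapsto t^c$ is injective on $\textsf{k}[[t]]$). For the reverse inclusion, I would take $\tilde f\in\tilde P$, expand $\tilde f=\sum_{i=0}^{c-1}\phi(h_i)\,x_1^i$ in the free basis, and push into $\textsf{k}[[t]]$ via the defining map of $\tilde A$. This yields
\[
\sum_{i=0}^{c-1} H_i(t)\,t^{ia_1}=0,\qquad H_i(t):=h_i(t^{ca_1},\ldots,t^{ca_d})\in\textsf{k}[[t^c]].
\]
The key point is that $\gcd(a_1,c)=1$, so the residues $ia_1\pmod c$ for $i=0,\ldots,c-1$ are pairwise distinct. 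The $c$ summands therefore live in disjoint residue classes modulo $c$ inside $\textsf{k}[[t]]$ and must vanish individually, forcing each $H_i=0$ and hence $h_i\in P$ (again by injectivity of $t\mapsto t^c$). Thus $\tilde f\in P\tilde R$. This step is the main obstacle, and it is precisely where both the free-basis decomposition and the coprimality hypothesis $\gcd(a_1,c)=1$ are essential.

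With $\tilde P=P\tilde R$ in hand, the remaining statements are routine. The equivalence $f\in P^k\iff\tilde f\in\tilde P^k$ (and the $k=1$ case for $P$) follows from $\tilde P^k=(P\tilde R)^k=P^k\tilde R$ together with $P^k\tilde R\cap R=P^k$. For minimal generators, the kernel of $R\to\tilde R\to\tilde R/\tilde\m$ is exactly $\m$, so the residue fields agree, and a standard base-change computation gives $\mu(\tilde P)=\dim_{\textsf{k}}(P\otimes_R\textsf{k})=\mu(P)$; since $\{\tilde g_i\}$ already generates $\tilde P$ with this many elements, it must be minimal. For the symbolic powers, the direction $f\in P^{(k)}\Rightarrow\tilde f\in\tilde P^{(k)}$ is immediate by applying $\phi$ to a witness $sf\in P^k$ (with $\phi(s)\notin\tilde P$ since $s\notin P$ and $P\tilde R\cap R=P$). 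For the converse, given $\tilde t\,\phi(f)\in\tilde P^k$ with $\tilde t\notin\tilde P$, I would expand $\tilde t=\sum_{i=0}^{c-1}\phi(t_i)x_1^i$ in the free basis; the decompositions $\tilde P^k=\bigoplus_i\phi(P^k)x_1^i$ and $\tilde P=\bigoplus_i\phi(P)x_1^i$ force every $t_if\in P^k$ while some $t_j$ must lie outside $P$, giving the multiplier that certifies $f\in P^{(k)}$.
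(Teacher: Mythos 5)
Your proof is correct. Note that the paper itself gives no argument for this lemma: it is stated with an attribution to Morales and a citation to \cite[Lemma 3.2]{Mo}, so you have supplied a self-contained proof of a result the authors simply import. Your argument is sound at every step and is essentially the standard one underlying Morales's gluing construction. The two load-bearing points are exactly the ones you isolate: (i) $\tilde R$ is free over $\phi(R)=\textsf{k}[[x_1^c,x_2,\ldots,x_d]]$ on the basis $1,x_1,\ldots,x_1^{c-1}$, which gives faithful flatness, the contractions $I\tilde R\cap R=I$, the internal decompositions $I\tilde R=\bigoplus_{i}\phi(I)x_1^i$, and the equality $\mu(\tilde P)=\mu(P)$ via base change of $P\otimes_R\textsf{k}$; and (ii) the coprimality $\gcd(a_1,c)=1$ forces the supports of the series $H_i(t)t^{ia_1}$ into pairwise disjoint residue classes modulo $c$, so the relation $\sum_i H_i(t)t^{ia_1}=0$ splits termwise and yields $h_i\in P$, proving $\tilde P\subseteq P\tilde R$. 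The reduction of the symbolic-power statement to the componentwise containments $t_if\in P^k$ with some $t_j\notin P$ is also correct, since both $\tilde P^k$ and $\tilde P$ decompose along the free basis. The only cosmetic issue is a mild notational overload: you write $\tilde f$ both for $\phi(f)$ with $f\in R$ and for a general element of $\tilde P$, which are not the same thing in the reverse-inclusion step; a general element of $\tilde P$ need not lie in $\phi(R)$, and indeed your expansion $\sum_i\phi(h_i)x_1^i$ is precisely what handles the general case.
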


Using Lemma \ref{mor}, we obtain the following extension of Theorems \ref{main} and \ref{higher}. 

\begin{Corollary}\label{deform}
If  Question \ref{Q1} has an  affirmative answer for a monomial curve $A$, then it also has an  affirmative answer for the monomial curve $\tilde{A}$. In particular,  Question \ref{Q1} has an  affirmative answer for successive modifications of the monomial curves as in Theorems \ref{main} and \ref{higher} in the sense of Morales.
\end{Corollary}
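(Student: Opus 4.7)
The plan is to use Lemma \ref{mor} as a two-way dictionary between the ordinary and symbolic powers of $P$ and those of $\tilde{P}$, together with the Cowsik--Nori equivalence recalled in the introduction: for a prime $P$ of height $d-1$ in a regular local ring, an affirmative answer to Question \ref{Q1} is equivalent to $P$ being generated by a regular sequence \cite{CN}.

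First I would assume that Question \ref{Q1} holds for $A$, and that $\tilde{P}^{n} = \tilde{P}^{(n)}$ for all $n \leq {\rm ht}\, \tilde{P} = d-1$. Contracting to $R$ and applying the identities $\tilde{P}^{n} \cap R = P^{n}$ and $\tilde{P}^{(n)} \cap R = P^{(n)}$ from Lemma \ref{mor} immediately gives $P^{n} = P^{(n)}$ for all $n \leq d-1 = {\rm ht}\, P$. The assumed affirmative answer for $A$ then forces $P^{n} = P^{(n)}$ for every $n$, so by \cite{CN} the ideal $P$ is a complete intersection; in particular $\mu(P) = d-1$.

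Next I would push this back up to $\tilde{P}$. By Lemma \ref{mor}, a minimal generating set $\{g_i\}$ of $P$ transforms into a minimal generating set $\{\tilde{g}_i\}$ of $\tilde{P}$, so $\mu(\tilde{P}) = \mu(P) = d-1 = {\rm ht}\, \tilde{P}$ in the regular local ring $\tilde{R}$. Hence $\tilde{P}$ is itself a complete intersection, and therefore $\tilde{P}^{n} = \tilde{P}^{(n)}$ for every $n$, giving the desired affirmative answer for $\tilde{A}$. The ``in particular'' assertion then follows by iteration: each Morales modification $(a_1, a_2, \ldots, a_d) \mapsto (a_1, c a_2, \ldots, c a_d)$ with $\gcd(c, a_1) = 1$ preserves a positive answer to Question \ref{Q1}, so finitely many such modifications starting from a curve as in Theorem \ref{main} or Theorem \ref{higher} inherit it.

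I do not foresee a serious obstacle: the entire argument is driven by the flatness and generator-preservation built into Lemma \ref{mor}, and the only non-formal input is the Cowsik--Nori equivalence \cite{CN}, which is precisely what lets the symbolic-power equality on $P$ be recorded as a minimal-generator statement that survives the faithfully flat extension $R \hookrightarrow \tilde{R}$.
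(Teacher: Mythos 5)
Your argument is correct and follows essentially the same route as the paper: contract the equalities $\tilde{P}^{n} = \tilde{P}^{(n)}$ to $P$ via Lemma \ref{mor}, invoke the assumed affirmative answer for $A$ together with the Cowsik--Nori equivalence to conclude $P$ is a complete intersection, and then transfer the complete intersection property back to $\tilde{P}$ using the generator-preservation in Lemma \ref{mor}. The paper's proof is just a terser version of exactly this reasoning.
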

\begin{proof}
If $\tilde{P}^{n} = \tilde{P}^{(n)}$ for all positive integers $ n \leq d-1$, then by Lemma \ref{mor}, we obtain that $P^{n} = P^{(n)}$ for all $ n \leq d-1$. Thus, by hypothesis, $P$ is a complete intersection and hence, by Lemma \ref{mor}, we obtain that $\tilde{P}$ is a complete intersection.
\end{proof}

\vskip .3in
\section{Low Multiplicities}\label{low}

Let $R$ be a regular local ring with  maximal ideal $\m$ and of dimension $d$. Let $P$ be a prime ideal of height $d-1$. We will show that  Question \ref{Q1} has an affirmative answer when the Hilbert-Samuel multiplicity $e(R/P)$ is sufficiently small. 

\begin{Theorem}\label{lowd}
Let $R$ be a  regular local  ring with  maximal ideal $\m$  and of dimension $d$. Assume $P$ is a  prime  ideal  of height $d-1$ such that $P \subset \m^2$. 
Then  $ P^{n} \not =P^{(n)}$ for a positive integer $n$, if
$$e(R/P)  <    \prod_{r=0}^{d-2}\,  \frac{2n+r}{ n+r}.$$
\end{Theorem}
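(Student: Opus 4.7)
The plan is to prove the contrapositive: assuming $P^n = P^{(n)}$, I will show that $e(R/P) \geq \binom{2n+d-2}{d-1}/\binom{n+d-2}{d-1}$, which equals $\prod_{r=0}^{d-2}\frac{2n+r}{n+r}$. The strategy is to compute the Hilbert-Samuel multiplicity $e(\m, R/P^n)$ in two different ways and compare. After replacing $R$ by $R(t) = R[t]_{\m R[t]}$ if necessary, I may assume the residue field is infinite.

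For the first computation, observe that $P^n = P^{(n)}$ forces the unique associated prime of $R/P^n$ to be $P$, so $\dim R/P^n = 1$. The associativity formula for multiplicities then gives
\[
e(\m, R/P^n) \;=\; \lambda_{R_P}(R_P/P^n R_P)\cdot e(\m, R/P) \;=\; \binom{n+d-2}{d-1}\cdot e(R/P),
\]
using that $R_P$ is a regular local ring of dimension $d-1$.

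For the second computation, I would pick $x \in \m \setminus \m^2$ that is superficial for $\m$ on $R/P^n$; such an $x$ exists because the residue field is infinite. Since $P \subset \m^2$, any such $x$ automatically lies outside $P$, so it is both a nonzero-divisor on $R/P^n$ and a minimal reduction of $\m$ on the one-dimensional module $R/P^n$. Therefore
\[
e(\m, R/P^n) \;=\; \lambda\bigl(R/(P^n+xR)\bigr) \;=\; \lambda(\bar R/\bar P^n),
\]
where $\bar R = R/xR$ is regular local of dimension $d-1$, with maximal ideal $\bar \m$, and $\bar P$ is the image of $P$. Since $P \subset \m^2$ descends to $\bar P \subset \bar\m^2$, we get $\bar P^n \subset \bar\m^{2n}$, and hence
\[
\lambda(\bar R/\bar P^n) \;\geq\; \lambda(\bar R/\bar\m^{2n}) \;=\; \binom{2n+d-2}{d-1}.
\]

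Combining the two calculations yields $\binom{n+d-2}{d-1}\cdot e(R/P) \geq \binom{2n+d-2}{d-1}$, which is exactly the claimed inequality. I expect the only delicate point to be the simultaneous choice of $x \in \m\setminus\m^2$ that is also superficial for $\m$ on $R/P^n$, but this is a standard application of the theory of superficial elements under an infinite residue field, and the remaining steps are formal.
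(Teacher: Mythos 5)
Your proof is correct and is essentially the same as the paper's: both arguments rest on the associativity formula giving $\binom{n+d-2}{d-1}\cdot e(R/P)$ on one side and the containment $P^{n}\subseteq \m^{2n}$, cut down by an element $x\in\m\setminus\m^{2}$ with $x\notin P$, giving $\binom{2n+d-2}{d-1}$ on the other. The only difference is presentational: the paper argues directly by showing $\lambda_R(R/(P^{(n)},x))<\lambda_R(R/(P^{n},x))$, whereas you phrase it as a contrapositive via two computations of $e(\m,R/P^{n})$; and your worry about choosing $x$ is unfounded, since a superficial element for $\m$ automatically has nonzero image in $\m/\m^{2}$.
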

\begin{proof}
We may assume the residue field of $R$ is infinite, see for instance \cite[Lemma 8.4.2]{HS}. 
Thus, as $R/P$ has dimension one, there exists $x \in R$ whose image in $R/P$ is a minimal reduction  of $\m/P$. Note that $x$ cannot be in $\m^2$ by Nakayama's Lemma, hence $R/(x)$ is regular. Recall that in a regular local ring $S$ with maximal ideal $\mathfrak{n}$ and  of dimension $k$, $ \lambda_S(S/\mathfrak{n}^n) =  {n+k-1 \choose k}$ for all positive integers $n$. Therefore, since  $P^{n} \subset \m^{2n}$, we have
$
 \lambda_R(R/(P^{n}, x)) \geq  \lambda_R(R/(\m^{2n}, x)) =  {2n+d-2 \choose d-1}.
$
On the other hand, since $R/P$ is a one-dimensional Cohen-Macaulay ring, using the associativity formula for multiplicities, we obtain
$$ 
\begin{array}{rcl}
\lambda_R(R/(P^{(n)}, x)) &=& e((x), R/P^{(n)})\\
& =&  \lambda_{R_P}(R_P/P^{n} R_P)\cdot e((x), R/P)\\
&=&{n+d-2 \choose d-1} \cdot e(R/P).
\end{array}
$$
The multiplicity bound in the statement is equivalent to $  {n+d-2 \choose d-1} \cdot  e(R/P) <{2n+d-2 \choose d-1}$. 
Therefore, $ \lambda_R(R/(P^{(n)}, x)) <  \lambda_R(R/(P^{n}, x))$. Thus, $P^{n}$ and $P^{(n)}$  cannot be the same.
\end{proof}
 
One can easily observe that the multiplicity bound in Theorem \ref{lowd} is increasing with respect to $n$. Thus, letting $n=d-1$, we obtain the largest bound that guarantees $P^{d-1} \not = P^{(d-1)}$. Therefore, we have the following corollary.

\begin{Corollary}\label{qlow}
Under the assumptions of  Theorem \ref{lowd}, Question \ref{Q1} has a positive answer provided
$$e(R/P)  <  \prod_{r=0}^{d-2}\, \frac{2d+r-2}{ d+r-1}.$$
\end{Corollary}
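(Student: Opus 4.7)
My plan is to derive the corollary directly from Theorem~\ref{lowd} via a vacuous-truth argument. First I would specialize the multiplicity bound of Theorem~\ref{lowd} to $n=d-1$; a routine substitution shows that $\prod_{r=0}^{d-2}\frac{2n+r}{n+r}$ evaluated at $n=d-1$ is exactly $\prod_{r=0}^{d-2}\frac{2d+r-2}{d+r-1}$, the bound appearing in the corollary. Thus the hypothesis on $e(R/P)$ places us precisely in the scope of Theorem~\ref{lowd} with $n=d-1$, and invoking the theorem yields $P^{d-1}\neq P^{(d-1)}$.

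Next, I would play this against the premise of Question~\ref{Q1}. Since ${\rm ht}\,P = d-1$, the hypothesis of Question~\ref{Q1} demands $P^n = P^{(n)}$ for every $n \leq d-1$, and in particular for $n=d-1$. But that is exactly the equality just ruled out. Hence no prime $P$ satisfying the assumptions of the corollary can satisfy the hypothesis of Question~\ref{Q1}, so the implication in Question~\ref{Q1} holds vacuously for every such $P$, which is the claim.

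I would also briefly justify the remark preceding the corollary, namely that the bound $\prod_{r=0}^{d-2}\frac{2n+r}{n+r}$ is non-decreasing in $n$, so that choosing $n=d-1$ optimizes the reach of the argument by giving the weakest (largest) constraint on $e(R/P)$. Rewriting $\frac{2n+r}{n+r} = 1 + \frac{n}{n+r}$ makes each factor manifestly non-decreasing in $n$ (strictly increasing when $r \geq 1$), so the product is non-decreasing in $n$. There is no real obstacle here: the whole proof is a one-line specialization of Theorem~\ref{lowd} together with the observation that a falsified hypothesis renders an implication vacuously true. All the substance resides in Theorem~\ref{lowd}, not in the corollary itself.
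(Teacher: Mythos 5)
Your proof is correct and matches the paper's own argument: the paper likewise obtains the corollary by specializing Theorem~\ref{lowd} to $n=d-1$ (noting the bound is increasing in $n$, so this is the optimal choice) and concluding that $P^{d-1}\neq P^{(d-1)}$ falsifies the hypothesis of Question~\ref{Q1}. Nothing is missing.
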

Note that the multiplicity bound in Corollary \ref{qlow} grows at least exponentially with respect to $d$, since each term of the product is greater than $\frac{3}{2}$.
The next corollary is an application of Theorem \ref{lowd}   in the case of monomial curves in embedding dimension $4$.

\begin{Corollary}\label{low}
Let $A=\textsf{k}[[t^{a_1}, t^{a_2}, t^{a_3}, t^{a_4}]]$. Consider $A$ as $R/P$, where  $R=\textsf{k}[[x,y,z,w]]$ and $P$ is the defining ideal of $A$. If $a_1 = 4$ or  $5$, then
$ P^3\not = P^{(3)}$. Therefore, Question \ref{Q1} has a positive answer in this case.
\end{Corollary}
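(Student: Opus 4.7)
The plan is to apply Theorem \ref{lowd} to the prime ideal $P$ with the choice $n = d - 1 = 3$. First I would verify the hypotheses: $R = \textsf{k}[[x,y,z,w]]$ is a regular local ring of dimension $d = 4$, the ideal $P$ has height $3 = d - 1$, and the inclusion $P \subset \m^2$ holds because of the non-redundancy assumption on $a_1,\ldots,a_4$, as observed at the start of Section~2.

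Next I would compute the multiplicity bound from Theorem \ref{lowd} at $n = 3$:
$$\prod_{r=0}^{d-2}\frac{2n + r}{n + r} \;=\; \frac{6}{3} \cdot \frac{7}{4} \cdot \frac{8}{5} \;=\; \frac{28}{5}.$$
By Lemma \ref{folklore}(2), the Hilbert--Samuel multiplicity satisfies $e(R/P) = e(\m_A, A) = a_1$. Hence for $a_1 \in \{4,5\}$ we get $e(R/P) = a_1 \leq 5 < \tfrac{28}{5}$, and Theorem \ref{lowd} immediately yields $P^3 \neq P^{(3)}$.

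To conclude the positive answer to Question \ref{Q1}, observe that $\mathrm{ht}\,P = 3$, so the hypothesis ``$P^n = P^{(n)}$ for all $n \leq \mathrm{ht}\,P$'' of Question \ref{Q1} already includes the case $n = 3$, which we have just ruled out. The implication therefore holds vacuously, which is the desired positive answer.

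There is essentially no obstacle here: the corollary is a direct specialization of Theorem \ref{lowd}, reducing to one arithmetic check against the bound and the identification $e(R/P) = a_1$ from Lemma \ref{folklore}(2). The only care needed is in the choice of $n$; as noted in the remark preceding Corollary \ref{qlow}, the bound $\prod_{r=0}^{d-2}(2n+r)/(n+r)$ is increasing in $n$, so taking $n = d - 1 = 3$ maximizes the range of admissible values of $a_1$, covering both $a_1 = 4$ and $a_1 = 5$.
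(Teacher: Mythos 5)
Your proposal is correct and follows exactly the paper's argument: apply Theorem \ref{lowd} with $n=3$, $d=4$, compute the bound $\frac{6}{3}\cdot\frac{7}{4}\cdot\frac{8}{5}=\frac{28}{5}=5.6$, and compare with $e(R/P)=a_1\leq 5$ via Lemma \ref{folklore}(2). The additional remarks on verifying $P\subset\m^2$ and on the vacuous truth of the implication in Question \ref{Q1} are consistent with the paper's standing assumptions and intent.
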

\begin{proof} Apply Theorem \ref{lowd} for $n=3$ and $d=4$. On the one hand $e(R/P)= a_1 \leq 5$, and on the other hand the multiplicity bound reduces to $ 5.6$.
Hence  $ P^3\not = P^{(3)}$. 
\end{proof}
We remark  that, by Corollary \ref{deform}, Question \ref{Q1} has an  affirmative answer for successive modifications of the monomial curves as in Corollary \ref{low} in the sense of Morales.

\vskip .3in
\section{Remarks}

We end this paper with some remarks and observations on equality of the ordinary and symbolic powers of ideals.

\begin{remark}
The  multiplicity bound in Theorem \ref{lowd} approaches $2^{d-1}$ as $n$ tends to infinity. Thus, if $e(R/P)  < 2^{d-1}$, then $P^{n} \not =P^{(n)} $ for $n$ large. 
Hence, if Question \ref{Q1} has a positive answer and $ P^{n}=P^{(n)} $ for all  $n \leq d-1$, then $e(R/P)  \geq 2^{d-1}$. 
This is consistent with the conclusion of Question \ref{Q1}, that $P$ is a complete intersection. 
To see this, suppose $P$ is generated by a regular sequence $a_1,\ldots, a_{d-1}$ and $x$ is a minimal reduction of $\m/P$ in $R/P$. Then, by  \cite[Theorem 14.9]{Mats}, we have
$$
e(\m,R/P) =  \lambda_R (R/(P,x)) =  \lambda_R (R/(a_1,\ldots, a_d)) \geq \prod_{i=1}^d {\rm ord}_{\m}(a_i) \geq 2^{d-1},
$$
where $a_d = x$. Note that ${\rm ord}_{ \m} (x) = 1$ and ${\rm ord}_{\m}(a_i) \geq 2$ for $i=1, \ldots, d-1$, as we are assuming $P \subset \m^2$.
\end{remark}

\begin{remark}
We know  that if $P^{n}  = P^{(n)}$ for $n$ large, then $P$ is a complete intersection \cite{CN}. The conclusion is also true if $P^{n}  = P^{(n)}$  for infinitely many $n$, see for instance Brodmann's result on stability of associated primes of $R/P^n$ in  \cite{Brod}. This can also be obtained using superficial elements, at least when $R$ has infinite residue field and $P$ has positive grade. If $P^{n}  = P^{(n)}$  for infinitely many $n$, then one can show that $P^{n}  = P^{(n)}$ for $n$ large. To see this, let $ x \in P$ be a superficial element, in the sense that $P^{n+1}: x = P^n$ for $n$ large, see \cite[Proposition 8.5.7]{HS}. Hence, if there exists an element $b \in P^{(n)} \setminus P^{n}$, then we have $xb \in P^{(n+1)} \setminus P^{n+1}$ for $n$ large.
\end{remark}

\begin{remark}
If $P^{n}  = P^{(n)}$ for $n$ large, then the analytic spread of $P$
is at most $d-1$  \cite{Brod2}. We note that this can also be seen via $\varepsilon$-multiplicity for one-dimensional primes. For a prime ideal $P$ of height $d-1$, we have  
$$ {\rm H}^0_{\m}(R/P^n) = P^{(n)}/P^n, $$ 
where the left hand side is the zero-th local cohomology of $R/P^n$ with support  in $\m$. Thus, if $P^{n}  = P^{(n)}$ for $n$ large, then $\varepsilon$-multiplicity of $P$ is zero, where
$$
\varepsilon(P)= \limsup_n \, \frac{d!}{n^d} \cdot  \lambda_R({\rm H}^0_{\m}(R/P^n)).
$$
Hence, by \cite[Theorem 4.7]{KV} or \cite[Theorem 4.2]{UV1}, the analytic spread of $P$ is at most~$d-1$. 
\end{remark}

\vskip .3in
\section*{Acknowledgements}  
This research was initiated at the MRC workshop on commutative algebra at Snowbird, Utah, in the summer of 2010. We would like to thank AMS, NSF and MRC for providing funding and a stimulating research environment. We are also thankful to the organizers of the workshop, David Eisenbud, Craig Huneke, Mircea Mustata and Claudia Polini for many helpful ideas and suggestions regarding this work. 

\vskip .3in

\renewcommand{\baselinestretch}{1}
\renewcommand{\refname}{References}
\renewcommand{\thepage}{\arabic{page}}

\begin{bibdiv}
\begin{biblist}
\bib{Abh}{article}{
  author={Abhyankar, S. S.},
  title={Local rings of high embedding dimension},
  journal={Amer. J. Math.},
  volume={89},
  date={1967},
  pages={1073--1077},
}

\bib{BoH}{article}{
  author={Bocci, C.},
  author={Harbourne, B.},
  title={Comparing powers and symbolic powers of ideals},
  journal={J. Algebraic Geom.},
  volume={19},
  date={2010},
  number={3},
  pages={399--417},
}

\bib{Bre}{article}{
  author={Bresinsky, H.},
  title={Symmetric semigroups of integers generated by $4$ elements},
  journal={Manuscripta Math.},
  volume={17},
  date={1975},
  number={3},
  pages={205--219},
}

\bib{Brod}{article}{
  author={Brodmann, M.},
  title={Asymptotic stability of ${\rm Ass}(M/I^{n}M)$},
  journal={Proc. Amer. Math. Soc.},
  volume={74},
  date={1979},
  number={1},
  pages={16--18},
}

\bib{Brod2}{article}{
  author={Brodmann, M.},
  title={The asymptotic nature of the analytic spread},
  journal={Math. Proc. Cambridge Philos. Soc.},
  volume={86},
  date={1979},
  number={1},
  pages={35--39},
}

\bib{EB}{article}{
  author={Buchsbaum, D.},
  author={Eisenbud, D.},
  title={Algebra structures for finite free resolutions, and some structure theorems for ideals of codimension $3$},
  journal={Amer. J. Math.},
  volume={99},
  date={1977},
  number={3},
  pages={447--485},
}

\bib{CN}{article}{
  author={Cowsik, R. C.},
  author={Nori, M. V.},
  title={On the fibres of blowing up},
  journal={J. Indian Math. Soc. (N.S.)},
  volume={40},
  date={1976},
  number={1-4},
  pages={217--222 (1977)},
}

\bib{ELS}{article}{
  author={Ein, L.},
  author={Lazarsfeld, R.},
  author={Smith, K. E.},
  title={Uniform bounds and symbolic powers on smooth varieties},
  journal={Invent. Math.},
  volume={144},
  date={2001},
  number={2},
  pages={241--252},
}

\bib{He}{article}{
  author={Herzog, J.},
  title={Ein Cohen-Macaulay-Kriterium mit Anwendungen auf den Konormalenmodul und den Differentialmodul},
  language={German},
  journal={Math. Z.},
  volume={163},
  date={1978},
  number={2},
  pages={149--162},
}

\bib{GHV}{article}{
  author={Guardo, E.},
  author={Harbourne, B.},
  author={Van Tuyl, A.},
  title={Symbolic powers versus regular powers of ideals of general points in $\mathbb {P}^1 \times \mathbb {P}^1$},
  journal={arXiv:1107.4906},
  volume={},
  date={},
  number={},
  pages={},
}

\bib{Ho}{article}{
  author={Hochster, M.},
  title={Criteria for equality of ordinary and symbolic powers of primes},
  journal={Math. Z.},
  volume={133},
  date={1973},
  pages={53--65},
}

\bib{HH}{article}{
  author={Hochster, M.},
  author={Huneke, C.},
  title={Comparison of symbolic and ordinary powers of ideals},
  journal={Invent. Math.},
  volume={147},
  date={2002},
  number={2},
  pages={349--369},
}

\bib{HH1}{article}{
  author={Hochster, M.},
  author={Huneke, C.},
  title={Fine behavior of symbolic powers of ideals},
  journal={Illinois J. Math.},
  volume={51},
  date={2007},
  number={1},
  pages={171--183},
}

\bib{HuH}{article}{
  author={Huckaba, S.},
  author={Huneke, C.},
  title={Powers of ideals having small analytic deviation},
  journal={Amer. J. Math.},
  volume={114},
  date={1992},
  number={2},
  pages={367--403},
}

\bib{Hu}{article}{
  author={Huneke, C.},
  title={The primary components of and integral closures of ideals in $3$-dimensional regular local rings},
  journal={Math. Ann.},
  volume={275},
  date={1986},
  number={4},
  pages={617--635},
}

\bib{HKV}{article}{
  author={Huneke, C.},
  author={Katz, D.},
  author={Validashti, J.},
  title={Uniform equivalence of symbolic and adic topologies},
  journal={Illinois J. Math.},
  volume={53},
  date={2009},
  number={1},
  pages={325--338},
}

\bib{HS}{book}{
  author={Huneke, C.},
  author={Swanson, I.},
  title={Integral closure of ideals, rings, and modules},
  series={London Mathematical Society Lecture Note Series},
  volume={336},
  publisher={Cambridge University Press},
  place={Cambridge},
  date={2006},
  pages={xiv+431},
}

\bib{KV}{article}{
  author={Katz, D.},
  author={Validashti, J.},
  title={Multiplicities and Rees valuations},
  journal={Collect. Math.},
  volume={61},
  date={2010},
  number={1},
  pages={1--24},
}

\bib{MX}{article}{
  author={Mantero, P.},
  author={Xie, Y.},
  title={On the Cohen--Macaulayness of the conormal module of an ideal},
  journal={J. Algebra},
  volume={372},
  date={2012},
  pages={35--55},
}

\bib{Mats}{book}{
  author={Matsumura, H.},
  title={Commutative ring theory},
  series={Cambridge Studies in Advanced Mathematics},
  volume={8},
  note={Translated from the Japanese by M. Reid},
  publisher={Cambridge University Press},
  place={Cambridge},
  date={1986},
  pages={xiv+320},
}

\bib{Mo}{article}{
  author={Morales, M.},
  title={Noetherian symbolic blow-ups},
  journal={J. Algebra},
  volume={140},
  date={1991},
  number={1},
  pages={12--25},
}

\bib{M}{article}{
  author={Morey, S.},
  title={Stability of associated primes and equality of ordinary and symbolic powers of ideals},
  journal={Comm. Algebra},
  volume={27},
  date={1999},
  number={7},
  pages={3221--3231},
}

\bib{UV1}{article}{
  author={Ulrich, B.},
  author={Validashti, J.},
  title={A criterion for integral dependence of modules},
  journal={Math. Res. Lett.},
  volume={15},
  date={2008},
  number={1},
  pages={149--162},
}

\bib{Vas}{article}{
  author={Vasconcelos, W. V.},
  title={Koszul homology and the structure of low codimension Cohen-Macaulay ideals},
  journal={Trans. Amer. Math. Soc.},
  volume={301},
  date={1987},
  number={2},
  pages={591--613},
}

\end{biblist}
\end{bibdiv}

\end{document}